\definecolor{Red}{rgb}{1,0,0}
\newtheorem{theorem}{Theorem}[section]
      \newtheorem{lemma}[theorem]{Lemma}
      \newtheorem{proposition}[theorem]{Proposition}
\begin{document}

\title [A proof of Sznitman's conjecture about ballistic RWRE]{A proof of Sznitman's conjecture about ballistic RWRE}

\author{Enrique Guerra$^\dagger$ and Alejandro F. Ram\'\i rez$^*$}

\email{($*$) aramirez@mat.uc.cl ($\dagger$) eaguerra@mat.uc.cl}
\address{Facultad de Matem\'aticas, Pontificia Universidad Cat\'olica de Chile
}

\thanks{Enrique Guerra has been partially supported by Fondo Nacional de Desarrollo 
  Cient\'\i fico y Tecnol\'ogico  3180255.
  Alejandro F. Ram\'\i rez has been 
	partially supported by Fondo Nacional de Desarrollo Cient\'ifico y Tecnol\'ogico  
	1180259 and Iniciativa Cient\'ifica Milenio.
            }

\keywords{Random walk in random environment,  ballisticity conditions.}
\subjclass[2010]{60K37, 82D30, 82C41.}

\maketitle

\begin{abstract}
We consider a random walk  in a uniformly elliptic i.i.d. random environment in $\mathbb Z^d$ for
$d\ge 2$. It is believed that whenever the random walk is transient in
a given direction it is necessarily ballistic. In order to quantify
the gap which would be needed to prove this equivalence, several
ballisticity conditions have been introduced. In particular, in \cite{Sz01,Sz02}, Sznitman defined  the so called conditions
$(T)$ and
$(T')$. The first one is the requirement that certain unlikely exit
probabilities from a set of slabs decay exponentially fast with their
width $L$. The second one is the requirement that
for all $\gamma\in (0,1)$ condition $(T)_\gamma$ is satisfied, which
in turn is defined as the requirement that
  the decay is like
$e^{-CL^\gamma}$ for some  $C>0$.
In this article we prove a conjecture of Sznitman of 2002 \cite{Sz02},
stating
that $(T)$ and $(T')$ are equivalent. Hence, this  closes the
circle proving the equivalence 
of conditions $(T)$, $(T')$ and $(T)_\gamma$ for some
$\gamma\in (0,1)$ as conjectured  in  \cite{Sz02}, and also
of each of these
ballisticity conditions with the polynomial
condition $(P)_M$ for $M\ge 15d+5$ introduced by Berger, Drewitz
and Ram\'\i rez in \cite{BDR14}. 
\end{abstract}

\section{Introduction}

Random walk in random environment is one of the most fundamental
mathematical models of probability theory. It describes the movement of a particle in a
disordered landscape and its importance stems due to its relevance as a
reliable framework to study phenomena originating from different
sciences, including  its connection to homogenization
theory through the link between the rescaled particle movement and the
rescaling of the appropriate differential operators (see \cite{Z04},
\cite{Sz04} or \cite{DR14}
for more details and further references). Some basic and simple to state questions about it have remained
persistently open. An example is the relationship
between directional transient behavior, where the random walk drifts
away in a certain direction, and ballisticity, where this drifting
happens with a non-vanishing velocity. For a random walk defined
in the hyper-cubic lattice $\mathbb Z^d$ in an environment which satisfies
some
minimal assumptions, which are uniform ellipticity and which is
i.i.d.,
it is expected that if $d\ge 2$, directional transience implies
ballisticity. In order to tackle this question, several intermediate
conditions which are stronger than directional transience, but in some
sense close to it, have been introduced, with the expectation that
they
would measure the gap needed to prove (or disprove), the conjectured
equivalence between directional transcience and
ballisticity.  Essentially, since directional transience in a given
direction $\ell\in\mathbb S^{d-1}$ implies that the exit probability of
the
random walk from a slab perpendicular to $\ell$, centered at the origin
of
width $L$, through its side in the negative region of space (in $\ell$
coordinate),
decays to $0$ as $L\to\infty$, it is natural to define intermediate
conditions,
called {\it ballisticity conditions}, which quantify this decay, with
the
expectation that they would imply ballisticity. Indeed, in \cite{Sz01}
and
\cite{Sz02}, Sznitman defined the so called conditions $(T)$, $(T')$
and $(T)_\gamma$ for $\gamma\in (0,1)$. The first one is defined as
the requirement that the above mentioned decay is exponentially fast
as $L\to\infty$ for an open set of directions, $(T)_\gamma$ is defined as the requirement that the
decay is like $e^{-CL^\gamma}$ for some constant $C>0$, while
$(T')$ is defined as the fulfillment of $(T)_\gamma$ for all
$\gamma\in (0,1)$. It was shown in \cite{Sz01,Sz02} and in \cite{BDR14} that all
of these conditions imply ballisticity and a functional central limit theorem.
Thanks to these results, and to the fact that condition $(T')$ can
be in principle checked case by case, it was established in \cite{Sz03}, that
it is possible to construct perturbations of the simple symmetric
random walk with a very small local drift, which are nevertheless
ballistic. 

 In \cite{Sz02}, Sznitman conjectured that
$(T)$,$(T')$ and $(T)_\gamma$ for any $\gamma\in (0,1)$ are all
equivalent, proving that for $\gamma\in (0.5,1)$, indeed
$(T)_\gamma$ implies $(T')$. Subsequently Drewitz and Ram\'\i rez
in \cite{DR11}, were able to push this equivalence down to
$\gamma\in (\gamma_d,1)$ for some dimension dependent
constant $\gamma_d\in (0.366,0388)$. In \cite{DR12},
the  equivalence between $(T)$ and $(T')$ was established
for dimensions $d\ge 4$, while in \cite{BDR14} the full
equivalence was proved between these conditions for $d\ge 2$
showing also  that both conditions are equivalent
to an effective polynomial condition $(P)_M$, for $M\ge 15d+5$,
where instead of exponential or stretched exponential decay for
the exit probability through the unlikely side of the slabs, one
imposes a polynomial decay of the form $1/L^M$. Nevertheless,
although a strong indication that the conjectured equivalence between conditions 
$(T)$ and $(T')$ was true was given in \cite{GR15}, the proof of
the equivalence 
remained open.

In this article we prove that for uniformly elliptic i.i.d.
environments conditions $(T)$
and $(T')$ are equivalent, closing the circle and hence finishing the proof
of Sznitman's conjecture of \cite{Sz02} that $(T)$, $(T')$
and $(T)_\gamma$ for any $\gamma\in (0,1)$, are all equivalent.

The proof mimics one-dimensional estimates through a coarse
graining method where sites are mapped into growing strips, introducing
crucial controls on atypically small probabilities, to
decouple the behavior of the random walk in overlapping strips.

In the following section we will define the basic notation and
formulate the main result of this article. In Section
\ref{sectionthree},
the proof of this theorem is presented.

\medskip

\section{Notation and results}
Denote by $|\cdot|_1$ and $|\cdot |_2$ the $l_1$ and $l_2$ norms
respectively,  defined
on $\mathbb Z^d$ and let $U:=\{e\in\mathbb Z^d: |e|_1=1\}=\{e_1,-e_1,\ldots,e_d,-e_d\}$.
Define $\mathcal P:=\{p(e), e\in U:p(e)\ge 0,\sum_{e\in U}p(e)=1\}$
and the {\it environmental space} $\Omega:=\mathcal P^{\mathbb Z^d}$.
We will call an element of $\omega=\{\omega(x):x\in\mathbb Z^d\}\in\Omega$ an {\it environment} 
where for each $x\in\mathbb Z^d$,
 $\omega(x)=\{\omega(x,e),e\in U\}\in\mathcal P$.
A random walk in a fixed environment $\omega$ starting
from $x\in\mathbb Z^d$ is defined as
the Markov chain $\{X_n:n\ge 0\}$ with $X_0=x$ and transition
probabilities to jump from a site $y$ to a nearest neighbor $y+e$,
$\omega(y,e)$. We denote by $P_{x,\omega}$ the law of
this random walk. Whenever a probability measure $\mathbb P$ is
prescribed on $\Omega$,  we call $P_{x,\omega}$ the
{\it quenched law} of the random walk in random environment (RWRE).
We define the {\it averaged} or {\it annealed} measure 
of the RWRE as the semidirect product $P_x:=\mathbb P\times P_{x,\omega}$
defined on $\Omega\times (\mathbb Z^d)^{\mathbb N}$.

We will say  that $\mathbb P$ is {\it  uniformly elliptic}
if there is a constant $\kappa>0$ such
that $\mathbb P(\omega(x,e)\ge \kappa)=1$
and that it is i.i.d. if the random variables $\{\omega(x):x\in
\mathbb Z^d\}$ are i.i.d. under $\mathbb P$. Throughout the
rest of this article we will assume that $\mathbb P$ is
uniformly elliptic and i.i.d.

Given a direction $\ell\in\mathbb S^{d-1}$, we say that the
random walk is transient in direction $\ell$ if

$$
\lim_{n\to\infty}X_n\cdot \ell=\infty.
$$
Furthermore, we say that the random walk is ballistic
in direction $\ell$ if

$$
\liminf_{n\to\infty}\frac{1}{n}X_n\cdot \ell>0.
$$
It is believed that for dimensions $d\ge 2$, whenever a random
walk in a uniformly elliptic i.i.d. environment is transient
in a given direction, it is necessarily ballistic \cite{DR14}.
In order to give at least
a partial answer to the above question, several conditions which interpolate between directional
transience and ballisticity, called {\it ballisticity conditions},
have been introduced: conditions $(T)$, $(T')$, $(T)_\gamma$ for
$\gamma\in (0,1)$ defined by Sznitman in \cite{Sz01,Sz02} and $(P)_M$ 
for $M\ge 1$, defined by Berger, Drewitz and Ram\'\i rez in \cite{BDR14}.

For $L\in \mathbb R$ and $\ell\in \mathcal S^{d-1}$ 
define the stopping times

\begin{equation}
\label{stopleftright0}
T_L^\ell:=\inf\{n\geq 0:\,X_n\cdot \ell\geq L \}\hspace{1.5ex}
\end{equation}
and
\begin{equation}
\label{stopleftright}
\tilde T_{L}^\ell:=\inf\{n\geq0 \, X_n\cdot \ell \leq L\}.
\end{equation}
Let $\gamma\in (0,1]$. We say that condition $(T)_\gamma$ in direction $\ell$, also denoted
by $(T)_\gamma|\ell$, is satisfied if there exists an open subset $O$ of
$\mathbb S^{d-1}$ containing $\ell$ such that for all $\ell'\in O$
we have that

\begin{equation}
  \nonumber
  \limsup_{\substack{L\rightarrow\infty}}L^{-\gamma}\log 
P_0\left[\tilde T_{-L}^{\ell'}<T_L^{\ell'}\right]<0.
\end{equation}
 Condition $(T)|\ell$ is defined as $(T)_1|\ell$, while
condition $(T')|\ell$ as the requirement that $(T^\gamma)|\ell$ is fulfilled for all $\gamma\in (0,1)$. For $M\ge 1$, the polynomial condition $(P)_M$,
is essentially defined as the requirement that there exists an $L_0$ such
that for some $L\ge L_0$ we have that

$$
P_0\left[\tilde T_{-L}^{\ell'}<T_L^{\ell'}\right]\le \frac{1}{L^M}
$$ 
 (see \cite{BDR14} for the exact definition). Note the {\it effective}
nature of the polynomial condition as opposed to conditions $(T)$, $(T')$
and $(T)_\gamma$, in the sense that it is a condition that in principle
can be verified for any given uniformly elliptic i.i.d. law $\mathbb P$.

In a series of works \cite{DR11,DR12, BDR14}, culminating with the introduction of the
polynomial condition in \cite{BDR14}, the equivalence between
conditions $(T')$, $(T)_\gamma$ for a given $\gamma\in (0,1)$ and
the polynomial condition $(P)_M$ for $M\ge 15d+5$, was established
for all dimensions $d\ge 2$. Nevertheless, the conjectured
equivalence between $(T)$ and $(T')$ has remained open.
In this article we prove this equivalence.

\medskip

\begin{theorem}
 \label{theoremequivalence} Consider a random walk in an i.i.d.
uniformly elliptic environment satisfying condition $(T')$.
Then, condition $(T)$ is satisfied.
\end{theorem}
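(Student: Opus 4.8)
The plan is to upgrade the stretched-exponential decay of the slab-exit probability guaranteed by $(T')$ to a genuine exponential decay, using a one-dimensional renormalization in the direction $\ell$. Working along $\ell$, I partition the relevant range into strips (slabs perpendicular to $\ell$) whose widths are allowed to grow, so that the walk, observed at the times it crosses from one strip to the next, behaves like a one-dimensional random walk with a strong drift toward $+\infty$. The key quantitative input is that under $(T')$ the probability of a ``backtrack'' across a strip of width $L$ is at most $e^{-CL^{\gamma}}$ for every $\gamma<1$; by choosing the strip widths large (but still $o(L)$ in aggregate) one makes each individual backtracking probability summably small. The heart of the matter is then a decoupling argument: the behavior of the walk in overlapping strips is not independent under the annealed measure, so one must insert, into the renormalization, controls on atypically small quenched exit probabilities (the paper's ``crucial controls on atypically small probabilities'') that allow one to treat successive strip crossings as essentially independent and to glue the per-strip estimates into an overall bound $P_0[\tilde T_{-L}^{\ell}<T_L^{\ell}]\le e^{-C'L}$.

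Concretely, I would proceed as follows. First, fix a direction $\ell$ in which $(T')$ holds and the associated open set $O$, and record the consequences of $(T')$ that I will use: for each $\gamma\in(0,1)$ a bound of the form $P_0[\tilde T_{-u}^{\ell'}<T_u^{\ell'}]\le e^{-c_\gamma u^{\gamma}}$ for $\ell'\in O$ and $u$ large, together with the already-established equivalence $(T')\Leftrightarrow(P)_M$, which supplies finite annealed moments of regeneration times and other integrability statements. Second, set up the coarse-graining: define a sequence of scales $L_k$ and a partition of the slab of width $L$ into $N\sim L/L_k$ consecutive sub-slabs, and define the ``macroscopic'' random walk $Y_j$ recording in which sub-slab the RWRE sits at successive crossing times $\sigma_j$. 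Third, prove a one-step estimate: the conditional probability, given the past, that $Y$ takes a backward step of size $m$ is bounded by a stretched-exponential in $mL_k$ plus an error coming from the event that the environment in the next sub-slab is ``bad'' (atypically small exit probability); here I would use uniform ellipticity and the $(P)_M$ moment bounds to show bad sub-slabs are rare. Fourth, feed these one-step bounds into a standard one-dimensional large-deviation / Lyapunov-function estimate for $Y$ to conclude that $Y$ exits on the far side before returning to the origin except with probability exponentially small in $N$, i.e.\ exponentially small in $L$.

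The main obstacle is the decoupling step — controlling the dependence between the walk's behavior in successive, possibly overlapping, strips under the annealed measure. Unlike the i.i.d. environment, the annealed walk has long-range correlations mediated by the shared environment, so a naive product bound over strips fails. The paper's resolution, which I would follow, is to introduce a modified environment (or modified exit probabilities) in which quenched exit probabilities that are atypically small are ``raised'' to a threshold, pay a controlled price for this modification using the stretched-exponential tails that $(T')$ already provides, and then work with the modified model where the requisite independence (or a suitable martingale/supermartingale structure for $Y$) does hold. Making the bookkeeping of these modifications compatible across all scales $L_k$ simultaneously — so that the total error incurred is still summable and does not destroy the exponential rate — is the delicate part; everything else is either a direct consequence of $(T')$ and $(P)_M$ or a routine one-dimensional random walk estimate.
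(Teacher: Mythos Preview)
Your outline has the right overall architecture --- a one-dimensional reduction via products of ratios $\hat\rho(i,\omega)$, plus a separate control on atypical quenched environments --- but the decoupling step you describe does not actually produce independence, and that is the whole difficulty. In the expansion $P_{0,\omega}[\tilde T_{-L_1}^{\ell'}<\tilde T\wedge T_{L_1}^{\ell'}]\le \sum_m\prod_{-N<i\le m}\hat\rho(i,\omega)$, the $\hat\rho(i,\omega)$ are correlated because $\hat q(x,\omega)$ for $x\in\mathcal H_i$ depends on the environment in the full slab between $\mathcal H_{i-1}$ and $\mathcal H_{i+1}$, and consecutive slabs overlap. Your proposed fix --- raising atypically small quenched exit probabilities to a threshold and paying for the modification with $(T')$ tails --- at best controls the \emph{denominator} $p(x,\omega)$ of $\hat\rho$, but leaves the dependence structure of the numerators $\hat q$ untouched. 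After the modification the $\hat\rho(i,\omega)$ are still correlated through the shared half-slab, and a product bound on $\mathbb E[\prod_i\hat\rho_i]$ still fails; this is precisely the obstacle that forced Cauchy--Schwarz in Sznitman's original argument, which yields only $(T')$, not $(T)$. Your assertion that after the modification ``the requisite independence does hold'' is the gap.

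The paper's device is different and is the new idea you are missing. On the typical event $\mathfrak T$ one replaces $\hat q(x,\omega)$ by
$\tilde q(x,\omega):=P_{x,\omega}\bigl[X_{V_1}\in\mathcal H_{I(x)-1},\ H_{\partial^+\mathcal H_{I(x)}}=\infty\bigr]$,
the probability of exiting the slab on the left \emph{without ever touching the frontal boundary} $\partial^+\mathcal H_{I(x)}$. An excursion-counting argument (summing over the number of visits to $\partial^+\mathcal H_i$ and using the lower bound on the quenched forward-exit probability supplied by $\mathfrak T$) gives $\hat q\le e^{2\tilde c L_0^\beta}\sup_{y\in\mathcal H'_i}\tilde q(y,\omega)$ on $\mathfrak T$. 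The crucial point is that $\tilde q(i,\omega)$ is measurable with respect to the environment strictly to the left of $\partial^+\mathcal H_i$, so the random variables $\tilde q(i,\omega)$ for different $i$ are genuinely independent under $\mathbb P$, and $\mathbb E\bigl[\prod_i\tilde q(i,\omega)\bigr]=\prod_i\mathbb E[\tilde q(i,\omega)]$ with no H\"older or Cauchy--Schwarz loss. The atypical-quenched-exit estimate is used only to bound $\mathbb P[\mathfrak T^c]$; it is not the mechanism that creates independence. Two smaller discrepancies with your plan: the argument is a multi-scale recursion $L_{k+1}=\nu L_k$ (the seed estimate is iterated), not a single partition into $N$ sub-slabs, and it uses neither $(P)_M$ nor regeneration-time moments --- only $(T')$ for the seed bound and Sznitman's atypical quenched exit estimate for $\mathbb P[\mathfrak T^c]$.
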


\medskip
An automatic corollary of Theorem \ref{theoremequivalence},  is the
extension of the applicability of Yilmaz 
large deviation result of \cite{Y11} stating that under condition 
$(T)$ there is equality between 
the quenched and annealed large deviation rate functions for random walks in uniformly elliptic 
i.i.d. environments in $d\ge 4$, to random walks satisfying 
condition $(P)_M$ for $M\ge 15d+5$ for which condition $(T)$ had not
been proved directly, as the perturbative examples of \cite{Sz03} and
the
more recent ones  in \cite{RS18}.

The proof of Theorem \ref{theoremequivalence}, is based on
a new method which captures the independence of events
defined in overlaping slabs through a carefull use of
atypical quenched exit estimates.
To explain in more detail this new strategy, we recall how  Sznitman in \cite{Sz02} proved
that $(T)_\gamma$ for $\gamma\in (0.5,1)$ implies $(T')$.
He introduced the so called
{\it effective criterion}, which somehow
 mimics the well
known  criteria proved by Solomon in \cite{So75} for random walks
on $\mathbb Z$ in an i.i.d. elliptic environment,
which says that if the expectation of $\omega(0,-1)/(1-\omega(0,-1))$
is smaller than one, the random walk is ballistic to the right.
He then proved that $(T)_\gamma$ for $\gamma\in (0.5,1)$ implies the
effective criterion, and  that the  effective criterion implies
$(T')$. 
The
effective criterion is defined through a quantity $\rho$ analogous to
the one dimensional quotient, but defined
in a larger scale, and basically it is required that some power of
$\rho$ should have a small expectation
at some scale.
As for one-dimensional random walks, somehow $\rho$ can be used
to expand the probability to exit through the left side of large
slabs. To obtain an exponential decay of this probability (which would
hence prove $(T)$), it is necessary to compute the expected value of
products of $\rho_i$'s, where  each $i\in\mathbb Z$ labels a sub-slab of
the large slab, and $\rho_i$ is
distributed as $\rho$. On the other hand the sub-slabs overlap
by pairs, so that for a given $i$, $\rho_i$ and $\rho_{i+1}$
are not independent. This lack of independence is a big complication
to obtain adequate estimates for the expectation, and in order to
prove that the effective criterion implies condition $(T')$, Sznitman
in \cite{Sz02}, decoupled the computation of the expectation of 
products of the $\rho_i$'s using Cauchy-Schwarz inequality.
The iterative use of this argument caused in the end a
decay which was not better than $e^{-L^{\gamma_L}}$ as $L\to\infty$, where $L$ is
the width of the final slab and
$\gamma_L=1-\frac{C}{(\log L)^{1/2}}$, and hence the
proof of $(T')$. Improving this argument through the
use of Cauchy-Schwarz inequality is possible, and produces
a decay 
which is somehow almost exponentially fast (see \cite{GR15}
for details), but still not enough to obtain the exponential decay
of condition $(T)$. A key idea introduced in this article to  prove Theorem \ref{theoremequivalence}
to avoid the use of Cauchy-Schwarz inequality
is to compare the probability to exit through
the left side of a sub-slab, with the probability to exit through
the left  side of the sub-slab without never moving to the right
of the initial departure point. This comparison is done
 through the use 
atypical quenched exit estimates which control the smallness
of the quenched exit probability of the random walk through
atypical exit points. These estimates have been extensively used
in \cite{Sz01,Sz02,DR12,BDR14,FH13} in the i.i.d. case
and more recently in \cite{G17} in the case of environments satisfying
some
kind of mixing condition. Once this comparison
is done in a proper way, the computation of the expected
value of the $\rho_i$'s is reduced to expectations of products of independent
terms, essentially obtaining the desired exponential
decay. These new methods are inspired on Sznitman's effective criterion
but do not rely directly on it.

\medskip
\section{Proof of  Theorem \ref{theoremequivalence}}
\label{sectionthree}
The proof of Theorem \ref{theoremequivalence} will be done obtaining
recursively estimates on the annealed atypical exit probability from
appropriate boxes of the walk in an increasing sequence of scales.
To do this, each box at a given scale will be subdivided in smaller
boxes of a size of the previous scale. A key step here will be  to decouple (in the sense of independence) the atypical exit
probabilities of  overlapping slabs.
To implement our recursive argument, we will need a seed inequality,
which
will enable us to pass from estimates at a given scale to estimates at
the next scale. This is the content of Section \ref{sectionseed}. In Section
\ref{sectionrecursion}, the seed estimate is used to implement the recursion, and
obtain a final estimate for the decay of the atypical exit probability
at
a given scale. In Section \ref{sectionfinal}, this estimate is used to finish the
proof of Theorem \ref{theoremequivalence}.

\subsection{Seed estimate}
\label{sectionseed}
Here we will derive an estimate for the annealed atypical exit
probability of the random walk at a given scale in terms of the
annealed atypical exit probability of the random walk at a smaller
scale. In order to obtain a useful estimate, we will compare
the atypical exit probability of a given box, with the event that
the random walk exits atypically without crossing its starting level
in the direction opposite to the atypical side. This comparison will
be made through the use of classical atypical quenched exit estimates
obtained by Sznitman in \cite{Sz01,Sz02}. Once these probabilities
are compared, it will be basically possible to argue that the quenched
atypical exit probabilities from  overlapping slabs at the smaller scale
are independent.

Let us introduce some notation. Given a subset $V\subset\mathbb Z^d$,
we define its boundary by

$$
\partial V:=\{x\notin V:|x-y|_1=1\ \rm{for}\ \rm{some}\ y\in V\}.
$$
We are assuming that there is a
direction $\ell\in\mathbb S^{d-1}$ such that $(T')|\ell$ is satisfied.
This means that we have a stretched exponential decay through atypical
sides of slabs for $\ell'\in O$, where $O$ is an open subset of
$\mathbb S^{d-1}$ containing $\ell$. Let us fix $\ell'\in O$ and let
$R$ be a rotation on $\mathbb R^d$ defined by $R(e_1)=\ell'$, 
$L>0$ and $\tilde L>0$. To simplify notation, we define the triple
${\bf S}:=(R,L,\tilde L)$ and
define the box associate to the triple $\bf S$ by

\begin{equation}
\label{triple}
B_{\bf S}:=R\left((-L,L)\times (-\tilde L, \tilde L)^{d-1}\right)\cap \mathbb Z^d
\end{equation}
and its {\it positive boundary} or {\it positive side} by

\begin{equation}
  \nonumber
\partial^+ B_{\bf S}:=\partial B_{\bf S}\cap \left\{z:\,
  z\cdot \ell'\geq L,\, |z\cdot R(e_k)|<\tilde L,\ {\rm for}\ 2\le k\in d\right\}.
\end{equation}
We also define the random variable attached to $\bf S$, 
\begin{equation}
  \nonumber
q_{\bf S}:=P_{0,\omega}\left[X_{T_{B_{\bf S}}} \notin\partial^+B_{\bf S} \right]. 
\end{equation}
 Let now
$L_0, \,\tilde L_0,\, L_1$ and $\tilde L_1$ be integers
greater that $3\sqrt d$ and such
that

\begin{equation}
  \nonumber
N:=\frac{L_1}{L_0}\in \mathbb N\cap\{z\in \mathbb R:\, z\geq 2\} \hspace{0.7ex}\mbox{and }\hspace{0.5ex}\tilde N:=\frac{\tilde L_1}{\tilde L_0}\in \{z\in \mathbb N:\, z>N\}. 
\end{equation}
Throughout the rest of this article we will use the fact that there
exits a constant

\begin{equation}
  \label{c1}
  c_1=c_1(d)
\end{equation}
such that given any pair of points
$x,y\in\mathbb Z^d$, there exits a nearest neighbor path of length
at most $c_1|x-y|_1$ joining them. Furthermore, in general the constants
(which  might depend  on the dimension $d$ and the ellipticity
constant $\kappa$) will be denoted by
$c_1(d,\kappa),c_2(d,\kappa),\ldots$, sometimes just writing
 $c_1,c_2,\ldots$.

Now, consider the corresponding triples ${\bf S}_{0}:=(R, L_0, \tilde L_0)$ and ${\bf S}_{1}:=(R, L_1, \tilde L_1)$.
The following seed estimate provides us with an upper bound for
$\mathbb E\left[q_{\bf S_1}\right]$ in terms of $\mathbb E\left[q_{\bf
    S_0}\right]$.

\medskip
\begin{proposition}[Seed estimate]
\label{prop1} 
Let $d\ge 2$. Consider a random walk in an i.i.d. uniformly elliptic
environment. Let $\ell \in \mathbb S^{d-1}$ and assume that condition
$(T')|\ell$ is satisfied
and let $\beta\in (1/2,1)$.
Then there exists  $c_2(d,\kappa)>0$, $\mu>0$ and an open set
$O\subset\mathbb S^{d-1}$ which contains $\ell$, such that for all $\ell'\in O$,
$L_0>3\sqrt d$, $\tilde L_0>3\sqrt d$ and  $\tilde N\geq 48N$ we have
that

\begin{gather}
\nonumber
\mathbb E\left[q_{{\bf S}_1}\right]\leq 
(N+2) 
\left(c_2\kappa^{-3c_1}\tilde L_1^{d-1}e^{3c_1\log(1/\kappa)L_0^\beta}
\right)^{2N+2}\mathbb E\left[q_{{\bf S}_0}\right]^N\\
\label{inepropo}
+
\left(c_2\tilde L_1^{d-2}\frac{L_1^3}{L_0^2}\tilde L_0\mathbb E\left[q_{{\bf S}_0}\right]\right)^{\frac{\tilde N}{12 N}}
+c_2N\tilde L_1^{d-1}e^{-\mu L_0^{d(2\beta-1)}}.
\end{gather}
\end{proposition}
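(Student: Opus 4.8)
The plan is to bound $\mathbb E[q_{\mathbf S_1}]$ by decomposing the atypical exit from the big box $B_{\mathbf S_1}$ into a product of atypical-exit-type events in the $N$ consecutive sub-slabs of width $L_0$ that tile $B_{\mathbf S_1}$ in the $\ell'$-direction. First I would set up the geometry: slice $B_{\mathbf S_1}$ into $N$ (overlapping, or adjacent with a small overlap) slabs $B^{(i)}$, $i=0,\dots,N-1$, each a translate of a box of the $\mathbf S_0$-scale in the $\ell'$-coordinate but keeping the full transverse width $\tilde L_1$. On the event $\{X_{T_{B_{\mathbf S_1}}}\notin\partial^+B_{\mathbf S_1}\}$ the walk must fail to cross to the positive side of at least a definite fraction of these sub-slabs before exiting; the trouble is that "exiting $B_{\mathbf S_1}$ through a non-positive face" is not literally a conjunction of independent sub-slab events, because (a) the sub-slabs overlap so their environments are not independent, and (b) the walk may exit through the transverse boundary rather than the back. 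Point (b) is handled by the last term $c_2 N\tilde L_1^{d-1}e^{-\mu L_0^{d(2\beta-1)}}$: using Sznitman's atypical quenched exit estimates (from \cite{Sz01,Sz02}, available under $(T')|\ell$), the quenched probability that the walk, started anywhere, exits a width-$L_0$ slab through its lateral boundary while $\tilde N\ge 48N$ is extremely small — stretched-exponentially small in $L_0^{d(2\beta-1)}$ — on an event of full $\mathbb P$-probability; summing over the $\le N$ slabs and $\le \tilde L_1^{d-1}$ relevant transverse locations gives that term. So on the complementary good event we may assume the walk traverses each sub-slab either forward through its positive face or backward through its back face.

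Next I would address the overlap/dependence, which is the technical heart. The idea advertised in the introduction is to \emph{compare} the quenched probability $q_{\mathbf S_0}^{(i)}$ of exiting sub-slab $i$ through a non-positive face with the quenched probability of exiting through the back face \emph{without ever moving to the positive side of the entrance hyperplane}. The point of this comparison is that the latter event depends only on the environment strictly to the left of (and on) the entrance slice, so consecutive such events live on disjoint sets of sites and become independent under $\mathbb P$. The comparison itself is again run through the atypical quenched exit estimates: conditioned on the environment, the probability of exiting backward while having wandered far forward first is tiny, so up to a multiplicative correction of the form $\big(c_2\kappa^{-3c_1}\tilde L_1^{d-1}e^{3c_1\log(1/\kappa)L_0^\beta}\big)$ per sub-slab — the $L_0^\beta$ in the exponent is the price of the coarse union bound over possible backward-exit locations, using the ellipticity-walk-cost constant $c_1$ — one can replace each $q_{\mathbf S_0}^{(i)}$ by the "restricted" version. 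Raising this per-slab correction to the power $2N+2$ (the factor $2$ to absorb the overlap, the $+2$ for boundary sub-slabs) and multiplying by $(N+2)$ from the union bound over which prefix of sub-slabs the walk first fails in, and then by $\mathbb E[q_{\mathbf S_0}]^N$ from the now-independent restricted factors, yields the first term of \eqref{inepropo}.

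For the second term I would handle the case the walk, instead of cleanly failing in many sub-slabs, makes only a few "global" failures but a large number of \emph{transverse} excursions — i.e. it exits sideways out of a stack of about $\tilde N/(12N)$ vertically-stacked copies before making progress; this is the regime where the relevant count is $\tilde N/(12N)$ rather than $N$, and one picks up $\big(c_2\tilde L_1^{d-2}\tfrac{L_1^3}{L_0^2}\tilde L_0\,\mathbb E[q_{\mathbf S_0}]\big)^{\tilde N/(12N)}$, where the polynomial prefactor $\tilde L_1^{d-2}L_1^3L_0^{-2}\tilde L_0$ counts entrance/exit locations and uses a reflection/strong-Markov argument to restart the walk at each transverse layer, together with an elementary path-counting bound on how many layers the walk can cross. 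Assembling the three contributions by a union bound over these mutually exhausting scenarios gives \eqref{inepropo}.

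The main obstacle I anticipate is making the comparison step rigorous: one must choose the "restricted" backward-exit event so that (i) it genuinely has disjoint site-dependence across consecutive sub-slabs — which forces the restriction to be "never move to the positive side of the entrance slice", not merely "stay in the sub-slab" — and (ii) the quenched cost of the comparison is controlled uniformly in the environment by the atypical quenched exit estimates, for which one needs $(T')|\ell$ precisely to guarantee those estimates hold (this is where the restriction $\beta\in(1/2,1)$ and the exponent $L_0^{d(2\beta-1)}$ enter, via the effective-criterion-type input of \cite{Sz01,Sz02}). Carefully tracking the transverse-width conditions $\tilde N\ge 48N$ and bookkeeping the constants $c_1,c_2,\mu$ through the union bounds is routine but delicate; I would do the geometry and the disjoint-dependence argument first, since everything else is a union bound organized around it.
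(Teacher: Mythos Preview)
Your core idea for the first term --- replace the backward-exit probability in each sub-slab by the restricted version ``exit backward without ever touching $\partial^+\mathcal H_i$'' so that consecutive factors become $\mathbb P$-independent --- is exactly the paper's innovation. But you have swapped the roles of the second and third terms, and this swap hides the actual mechanism of the proof. The third term $c_2 N\tilde L_1^{d-1}e^{-\mu L_0^{d(2\beta-1)}}$ is \emph{not} a bound on lateral exit of the walk; it is the $\mathbb P$-probability of the \emph{bad-environment} event $\mathfrak T^c$, where $\mathfrak T$ is defined as the set of $\omega$ for which the quenched probability $P_{z,\omega}[(X_{T_{z+U_{\beta,L_0}}}-z)\cdot\ell'>0]$ exceeds $e^{-\tilde c L_0^\beta}$ at every relevant point $z$. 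Sznitman's atypical quenched exit estimate bounds $\mathbb P[\mathfrak T^c]$, and this is where the exponent $d(2\beta-1)$ enters. The second term $\big(c_2\tilde L_1^{d-2}\tfrac{L_1^3}{L_0^2}\tilde L_0\,\mathbb E[q_{\mathbf S_0}]\big)^{\tilde N/(12N)}$ is the bound on the \emph{lateral} exit probability $P_0[\tilde T\le \tilde T_{-L_1}^{\ell'}\wedge T_{L_1}^{\ell'}]$, taken verbatim from Sznitman's effective-criterion argument; note it involves $\mathbb E[q_{\mathbf S_0}]$, not an atypical-quenched input.

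The distinction matters because the comparison $\hat q\le e^{2\tilde c L_0^\beta}\sup_y\tilde q(y,\omega)$ is \emph{not} obtained by a union bound over backward-exit locations as you suggest; it is a deterministic geometric-series argument valid only on $\mathfrak T$: decompose according to the number $\mathfrak E_i$ of excursions from $\mathcal H_i$ to $\partial^+\mathcal H_i$ before $V_1$, and on $\mathfrak T$ each such excursion has probability at least $e^{-2\tilde c L_0^\beta}$ of reaching $\mathcal H_{i+1}$ first, so $\sum_j(1-e^{-2\tilde c L_0^\beta})^j=e^{2\tilde c L_0^\beta}$. Off $\mathfrak T$ you simply pay $\mathbb P[\mathfrak T^c]$. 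You are also missing the one-dimensional gambler's-ruin input (Step~1 in the paper): the product $\sum_{m}\prod_{-N<i\le m}\hat\rho(i,\omega)$ does not come from a ``union bound over which prefix the walk first fails in'' but from a supermartingale argument with an explicit function $f(j,\omega)$ built from the $\hat\rho(i,\omega)^{-1}$, which yields $P_{0,\omega}[\tilde T_{-L_1}^{\ell'}<\tilde T\wedge T_{L_1}^{\ell'}]\le f(0)/f(-N)$. Without this step you do not get $N$ independent factors in the product --- a walk that exits backward need not ``fail'' in every sub-slab in any naive sense, and your phrase ``at least a definite fraction'' would only give a power strictly less than $N$.
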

\begin{proof} The strategy will be to divide the box defined by the triple ${\bf 
    S}_1=(R,L_1,\tilde L_1)$ into smaller sub-boxes corresponding to ${\bf 
    S}_0=(R,L_0,\tilde L_0)$. We should have in mind that the
  eventually the length $\tilde L_0$ and $\tilde L_1$ will be chosen
  much larger than $L_0$ and $L_1$ respectively, so  the boxes will
  look very much like slabs. We will divide the proof in eight steps:
  in step 0, we give the necessary definitions to make the above
  described subdivision; in step 1, we use classical one-dimensional
  arguments to estimate the probability that the random walk exits
  through
  the atypical sides of the large box (the back and lateral sides), in
  terms of an expansion involving a quantity analogous to the
  one-dimensional quotient between jumping to the left and to the
  right;
  in step 2 we will define an important typical quenched event which
  will eventually decouple the behavior in different slabs; in step 3
  we will use the above definition to make this decoupling; in steps 4
  and 5,
  we will apply the previous decoupling to express the atypical
  exit probability from the large box, in terms of a product of
  corresponding probabilities in the small scale; in step 6 we bound
  the atypical quenched exit event; in step 7, we bound the
  probability of lateral exit of the random walk; and finally in step
  8 we combine the estimates of the previous steps to finish the
  proof.

  \medskip

  \noindent {\it Step 0: preliminary definitions.}
For each $i\in \mathbb Z$ define the set
\begin{equation}
  \nonumber
  \mathcal H_i:=\{x\in \mathbb Z^d: 
  |x-x'|_1=1\ {\rm and}\ (x\cdot \ell'-iL_0)(x'\cdot\ell'-iL_0)\leq 0\ {\rm for}\
  {\rm some}\ x'\in\mathbb Z^d\}
\end{equation}
as well as the function $I:\mathbb Z^d\rightarrow \mathbb Z$ given by 
\begin{equation}
 \nonumber
I(x)=i, \,\,\mbox{whenever}\,\,x\cdot \ell\in \left[iL_0-\frac{L_0}{2}, iL_0+\frac{L_0}{2}\right). 
\end{equation}
Let $(\theta_n)_{n\ge 0}$ be the canonical shift on $(\mathbb Z^d)^{\mathbb N}$.
We then define the successive visit times  to the different $(\mathcal H_i)_{i\in\mathbb Z}$ sets as 
\begin{gather}
\nonumber 
V_0=0,\,\, V_1=\inf\{n\geq 0: X_n\in \mathcal H_{I(X_0)+1}\cup \mathcal H_{I(X_0)-1}\}\\
\nonumber
\mbox{and }\,\, V_{i+1}=V_1\circ \theta_{V_i}+V_i \,\, \mbox{for $i\geq 1$}. 
\end{gather}
Let us also define the first exit time of the random walk from the box
$B_{{\bf S}_1}$ through its lateral sides as

\begin{equation}
\nonumber
  \tilde T:=\inf\{n\geq 0: |X_n\cdot R(e_j)|\geq \tilde L_1\, \mbox{for some $j\in [2,d]$}\}. 
\end{equation}
In order to mimic the one-dimensional criteria for ballisticity of
random walks in random environment \cite{So75},
it will be convenient to consider for each $x\in \mathbb Z^d$ and integer $i$ the random variables defined by the equations 
\begin{gather}
\nonumber 
 q(x,\omega):=P_{x,\omega}[X_{V_1}\in \mathcal H_{I(x)-1}]=:1- p(x,\omega),\\
\nonumber 
\hat q(x,\omega):=P_{x,\omega}[X_{V_1}\in \mathcal H_{I(x)-1}, V_1\leq \tilde T],\\
\nonumber 
\hat p(x,\omega):=P_{x,\omega}[X_{V_1}\in \mathcal H_{I(x)+1}, V_1\leq
\tilde T]\ {\rm and}\\
\label{defrho}
\hat \rho(i,\omega):=\sup\left\{\frac{\hat q(x,\omega)}{p(x,\omega)}:
  x\in \mathcal H_{i},\,\,|x\cdot R(e_j)|_2<\tilde L_1\ {\rm for}\
  {\rm all}\  2\le j \le d\right\}.
\end{gather}
Consider the
function
$f:\Omega\times \mathbb Z\rightarrow \mathbb R^+$
defined by
\begin{gather}
\nonumber 
f(j,\omega)=0 \hspace{2ex} \mbox{for }\hspace{2ex} j\geq N+2 \quad
{\rm and}\\
\label{defffunc}
f(j,\omega)=\sum_{j \leq m \leq N+1 }\prod_{m<i\leq N+1}\hat \rho (i,\omega)^{-1} \hspace{2ex} \mbox{for }\hspace{2ex} j\leq N+1. 
\end{gather}
Throughout the rest of the proof we may not write explicitly the
dependence on $\omega$ of the random variables involved. 

\medskip

\noindent {\it Step 1: one-dimensional argument to bound exit probabilities.}
Here we will use  one-dimensional explicit formulas for exit
probabilities to obtain the following bound.
\medskip

\begin{equation}
\label{quenine}
P_{0,\omega}\left[\tilde T_{-L_1}^{\ell'}<\tilde T\wedge T_{L_1}^{\ell'}\right]\leq \frac{f(0)}{f(-N)},
\end{equation}
[cf. (\ref{stopleftright}) and (\ref{stopleftright0})].
The proof of (\ref{quenine}) is similar to the proof of inequality
(2.18) of \cite{Sz02}, but for completness we will outline it here.
Consider the $\left(\mathcal F_{V_m}\right)_{m\geq0}$-stopping time  
\begin{equation*}
\tau:=\inf\left\{m\geq0: X_{V_m}\in \mathcal H_{-N}\cup \mathcal H_{N+1}\right\}.
\end{equation*}
We now assert that the random variables 
\begin{equation}
\nonumber
 E_{0,\omega}\left[f(I(X_{V_{m\wedge \tau}})), V_{m\wedge \tau}\leq \tilde T\right]
\end{equation}
are decreasing  with $m$. Indeed, as in (2.20) of (\cite{Sz02})  observe that 
\begin{gather}
\nonumber 
E_{0,\omega}\left[f(I(X_{V_{(m+1)\wedge \tau}})), V_{(m+1)\wedge \tau}\leq \tilde T\right]\\
\nonumber 
\le E_{0,\omega}\left[f(I(X_{V_{m\wedge \tau}})), V_{m\wedge \tau}\leq \tilde T, \tau\leq m \right]+
E_{0,\omega}\left[f(I(X_{V_{m+1}})), V_{m+1}<\tilde T, \tau>m \right]\\ 
\nonumber
= E_{0,\omega}\left[f(I(X_{V_{m \wedge \tau}})), V_{m\wedge \tau}\leq \tilde T, \tau\leq m\right]
+E_{0,\omega}\left[\tau>m, V_m<\tilde T, E_{X_{V_m},\omega}\left[f(I(X_{V_1})), V_1\leq \tilde T\right]\right].
\end{gather}
On the other hand, on $\{\tau>m, V_m<\tilde T\}$
(recall definitions in (\ref{defrho}))
we have that $P_{X_{V_m},\omega}$-a.s

\begin{gather}
\nonumber 
E_{X_{V_m},\omega}\left[f(I(X_{V_1})), V_1\leq \tilde T\right]\\
\nonumber 
\leq f(I(X_{V_m}))+  p(X_{V_m},\omega)\left(f(I(X_{V_m})+1)-f(I(X_{V_m}))\right)\\
\nonumber 
+\hat q(X_{V_m},\omega)\left(f(I(X_{V_m})-1)-f(I(X_{V_m}))\right)\\
\label{fnegt}
\le f(I(X_{V_m}))+ \prod_{I(X_{V_m})-1<j\leq N+1}\hat \rho(j)^{-1}\left(\hat q(X_{V_m}, \omega)- p(X_{V_m},\omega)\hat \rho(I(X_{V_m}))\right),
\end{gather}
where we have used that $\hat p(X_{V_m},\omega)\leq
p(X_{V_m},\omega)$ and 
$p(X_{V_m},\omega)+\hat q(X_{V_m},\omega)\leq 1$ in the first inequality 
and the explicit expression for $\hat \rho$ in 
(\ref{defffunc}) to get the last inequality.
At the same time, by the definitions (\ref{defrho}) and by the fact that
$P_{0,\omega}$-a.s. on the event $\{V_m<\tilde T\}$, we have
$X_{V_m}\in \mathcal H_{I(X_{V_m})}\cap\{z\in \mathbb Z^d:\, |z\cdot
R(e_i)|<\tilde L_1\ {\rm for}\ {\rm all}\ 2\le i\le d \}$  we conclude that the last
term in (\ref{fnegt}) is negative.
As a result, using  Fatou's lemma,  we have 
\begin{equation*}
E_{0,\omega}\left[f(I(X_{V_\tau})), V_\tau\leq \tilde T, \tilde T_{-L_1}^{\ell'}<\tilde T\wedge T_{L_1}^{\ell'}\right]\leq f(0).\\
\end{equation*}
The claim (\ref{quenine}) follows now after
 noticing $P_{0,\omega}$-a.s. on the event appearing in (\ref{quenine}), that
$X_{V_\tau}\in \mathcal H_{-N}$ and $V_\tau\le\tilde T$.

\medskip
\noindent {\it Step 2: typical quenched exit event.}
Here we will define an exit event for the random walk at a given slab,
which corresponds somehow to a minimal size the typical quenched exit
probability should have. 
 Let us start introducing for each $i\in\mathbb Z$, the \textit{frontal} part of the $\mathcal H_i$-boundary,
 \begin{equation}
  \nonumber
\partial^+\mathcal H_i=\partial \mathcal H_i\cap \{z:\,z\cdot \ell'-iL_0\geq 0\}.
\end{equation}
Define also for each $x\in\mathbb Z^d$ the quenched probabilities that
the random walk starting from $x$ exits the corresponding slab through
its atypical side, but without ever visiting the right-hand half of
the slab, as
\begin{equation}\label{qinfty}
\tilde q(x, \omega)= P_{x,\omega}\left[X_{V_1}\in \mathcal H_{I(x)-1}, H_{\partial^+\mathcal H_{I(x)}}=\infty\right].
\end{equation}
The above probability will be a key definition in our proof since it
will be in a sense comparable to $q(x,\omega)$, but it will enable us
to produce enough independence in the products appearing in the
right-hand side of (\ref{quenine}).

 Let us also define the truncation of $\mathcal H_i$ as 

 \begin{gather}
   \nonumber 
\mathcal H'_i:= \mathcal H_i\cap 
\left\{z: |z\cdot R(e_j)|<\tilde L_1,\, \ {\rm for}\ {\rm all}\ 2\le j\le d\right\},
\end{gather}
 its frontal boundary by

 \begin{gather}
   \nonumber 
\partial^+\mathcal H'_i:= \partial^+\mathcal H_i\cap 
\left\{z: |z\cdot R(e_j)|<\tilde L_1,\, \ {\rm for}\ {\rm all}\ 2\le j\le d\right\} 
\end{gather}
and for $\beta\in (0,1)$ define
 \begin{gather}
   \nonumber 
\mathcal H_{i,\beta}:=\left\{x\in \mathbb Z^d:\, \exists x'\in \mathbb 
  Z^d, |x-x'|_1=1\, \left(x\cdot \ell'-i(L_0+1+L_0^\beta)\right) 
  \right.\times\\
\nonumber
\left.\left(x'\cdot \ell'-i(L_0+1+L_0^\beta)\leq 0\right)\right\}\cap 
\left\{z: |z\cdot R(e_j)|<\tilde L_1,\, \ {\rm for}\ {\rm all}\
  2\le j\le d\right\}. 
\end{gather}
Keeping in mind the above remark   let
\begin{equation}
\label{defctilc}
\tilde c:=c_1 \log\left(\frac{1}{\kappa}\right)\\
\end{equation}
and the asymmetric slab

\begin{equation}
  \nonumber
 U_{\beta,L_0}:=\{x\in \mathbb R^d: x\cdot \ell'\in (-L^\beta_0, L_0)\}.
\end{equation}
We can now define the {\it typical quenched exit event} as

\begin{gather}
\nonumber 
\mathfrak T:=\left\{\omega \in \Omega:\inf_{\substack{z\in \mathcal
      H_{i, \beta}\\ -N\le i\le N+2}}P_{z,\omega}\left[\left(X_{T_{z+U_{\beta, L_0}}}-z\right)\cdot \ell'>0\right]>e^{-\tilde cL_0^\beta },\right.\\
\label{typicalevent}
\left.\inf_{\substack{z\in \mathcal H'_i\\ -N\le i\le N+2}}
  P_{z,\omega}\left[\left(X_{T_{z+U_{\beta, L_0}}}-z\right)\cdot \ell'>0\right]>e^{-\tilde{c}L_0^\beta}\right\}. 
\end{gather}
\medskip

\noindent {\it Step 3: comparing $\hat q$ with $\tilde q$.}
Here we will prove that whenever $\omega\in\mathfrak T$, for all $i\in\mathbb Z$ and $x\in\mathcal H_i$,

\begin{equation}
\label{eqcom}
\hat q(x,\omega)\le e^{2\tilde c L^\beta}\sup_{y\in\mathcal H'_i} \tilde q(y,\omega).
\end{equation}
To prove (\ref{eqcom}) note that on
the event $\{ X_{V_1}\in \mathcal H_{I(x)-1},\, V_1\leq \tilde T \}$,
the  number of excursions from the set $\mathcal H_i$ to its frontal
boundary $\partial^+\mathcal H_i$ before the time $V_1$, 
\begin{equation}\label{numberexrighti}
\mathfrak E_i:=\sum_{n=0}^{V_1-1}\mathds{1}_{\left\{X_{n-1}\in\mathcal H_i,\,  X_n\in \partial ^+\mathcal H_i\right\}}
\end{equation}
is $P_{x,\omega}$-a.s. finite. Before using the finiteness of the
random variables
defined in (\ref{numberexrighti}), we define $U_0:=0$ and  sequences
of $(\mathcal F_n)_{n\geq 0}$-stopping times corresponding
to the moments of the consecutive excursions defined above, as
\begin{gather}
\nonumber 
U_1:=\inf\{n\geq0: X_n\in \partial^+ \mathcal H_i\},\, W_1:=H_{\mathcal H_i}\circ \theta_{U_1}+U_1,\\
\nonumber 
\mbox{and by recursion in }k\geq 1 \mbox{define}\\
\nonumber 
U_{k+1}:=U_1\circ \theta_{W_k}+W_k,\, W_{k+1}:=H_{\mathcal H_i}\circ \theta_{U_{k+1}}+U_{k+1}. 
\end{gather}
Then, for each $i\in\mathbb Z$ and $x\in \mathcal H'_i$, we have 
\begin{gather}
\nonumber 
P_{x,\omega}\left[X_{V_1}\in \mathcal H_{I(x)-1}, V_1\leq \tilde T\right]=\sum_{j=0}^\infty P_{x,\omega}\left[X_{V_1}\in \mathcal H_{I(x)-1}, V_1\leq \tilde T,\mathfrak E_i=j\right]\leq\\
\label{decomleftexit}
P_{x,\omega}\left[X_{V_1}\in \mathcal H_{I(x)-1}, H_{\partial^+\mathcal H_i}=\infty\right]+\sum_{j=1}^\infty P_{x,\omega}\left[X_{V_1}\in \mathcal H_{I(x)-1}, V_1\leq \tilde T,\mathfrak E_i=j\right]. 
\end{gather}
On the other hand, for each $j\geq 1$ we can use successively the strong Markov property to see that 
\begin{gather}
\nonumber 
P_{x,\omega}\left[X_{V_1}\in \mathcal H_{I(x)-1}, V_1\leq \tilde T,\mathfrak E_i=j\right]\leq \\
\nonumber 
P_{x,\omega}\left[U_1<\tilde T \wedge \mathcal H_{I(x)-1},\, P_{X_{U_1},\omega}\left[W_1< \tilde T\wedge H_{\mathcal H_{I(x)+1}},\ldots\right.\right.\\
\nonumber
\left.\left.\ldots ,P_{X_{W_{j-1}},\omega}\left[U_1<\tilde T\wedge H_{\mathcal H_{I(x)-1}},\, P_{X_{U_j},\omega}\left[W_1<\tilde T\wedge \mathcal H_{I(x)+1},\, \right.\right.\right.\right.\\
\label{mchainargu}
\left.P_{X_{W_j},\omega}\left[X_{V_1}\in \mathcal H_{I(x)-1}, V_1\leq \tilde T, H_{\partial^+ \mathcal H_i}=\infty\right] \cdots \right]. 
\end{gather}
Moreover, notice that the last expression in (\ref{mchainargu}) is less than or equal to
\begin{gather}
\label{boundmchain}
\sup_{x\in \mathcal H'_i}P_{x,\omega}\left[X_{V_1}\in \mathcal H_{I(x)-1}, H_{\partial^+ \mathcal H_i}=\infty\right]
\sup_{\substack{y\in \partial^+\mathcal H'_i}}P_{y,\omega}\left[W_1<\tilde T\wedge H_{\mathcal H_{i+1}}\right]^j. 
\end{gather}
Therefore,  applying (\ref{boundmchain}) to inequality
(\ref{decomleftexit}) we get that 
\begin{gather}
\nonumber 
P_{x,\omega}\left[X_{V_1}\in \mathcal H_{I(x)-1}, V_1\leq \tilde T\right]\leq \sup_{x\in \mathcal H'_i}P_{x,\omega}\left[X_{V_1}\in \mathcal H_{I(x)-1}, H_{\partial^+ \mathcal H_i}=\infty\right] \\
\label{decomlexit1}
\times\sum_{j= 0}^\infty\sup_{\substack{y\in \partial^+\mathcal H'_i}}P_{y,\omega}\left[W_1<\tilde T\wedge H_{\mathcal H_{i+1}}\right]^j. 
\end{gather}
Note that 
 for each $-N\le i\le N$ and each $y\in \partial^+ \mathcal H'_i$,
 we know that there is another point $y'\in H_{i,\beta}$ which can be joined to $y$ using a nearest neighbour path inside of box $B_1$ of length less than
or equal to $c_1L_0^\beta$ [cf. (\ref{c1})]. Thus, since $\omega\in\mathfrak T$, 
\begin{gather*}
P_{y,\omega}\left[W_1<\tilde T\wedge H_{\mathcal H_{i+1}}\right]\leq 
1-P_{y,\omega}\left[W_1\geq H_{\mathcal H_{i+1}}\right]\\
\leq 1-\kappa^{c_1L_0^\beta}P_{y',\omega}\left[\left(X_{T_{y'+U_{\beta,L_0}}}-y'\right)\cdot\ell'>0 \right]\\
\stackrel{(\ref{typicalevent})}<1-\kappa^{-c_1L_0^\beta}e^{-\tilde cL_0^\beta}\stackrel{(\ref{defctilc})}=1-e^{-2\tilde cL_0^\beta}. 
\end{gather*}
Using the fact that $\sum_{j=0}^\infty (1-e^{-2\widetilde cL_0^\beta})^j=
e^{2\widetilde cL_0^\beta}$, this finishes the proof
of (\ref{eqcom}).

\medskip

\noindent {\it Step 4: bound on the atypical exit probability in
 the typical quenched exit event.}
Here we will prove that
\begin{gather}
\label{productilqi2}
\mathbb E\left[P_{0,\omega}\left[\tilde T_{-L_1}^{\ell'}\leq T_{L_1}^{\ell'} \wedge \tilde T\right],\, \mathfrak T\right] 
\le
\sum_{m=0}^{N+1}\prod_{-N<i\leq m}\left(e^{3\tilde cL_0^\beta}\mathbb E\left[\tilde q(i,\omega)\right]\right). 
\end{gather}
Notice that by inequality (\ref{quenine}), we find that 
\begin{gather}
\label{eqcomb}
P_{0,\omega}\left[\tilde T_{-L_1}^{\ell'}\leq T_{L_1}^{\ell'} \wedge \tilde T \right]\leq 
\frac{\sum_{m=0}^{ N+1} \prod_{m<i\leq N+1}\rho_i^{-1}}{\prod_{-N< j\leq N+1}\rho_j^{-1}}=
\sum_{m=0}^{ N+1}\prod_{-N<i\leq m}\rho_i. 
\end{gather}
Now, on the  typical quenched exit event $\mathfrak T$ we have that
\begin{gather}
\label{eqcoma}
\sum_{m=0}^{ N+1}\prod_{-N<i\leq m}\rho_i\le\sum_{m=0}^{ N+1}\prod_{-N<i\leq m}\left(e^{\tilde c L_0^\beta}\, \hat q (i,\omega) 
\right). 
\end{gather}
Combining the bounds (\ref{eqcom}) with (\ref{eqcoma}) and (\ref{eqcomb}), we conclude that

\begin{gather}
\label{productilqi}
\mathbb E\left[P_{0,\omega}\left[\tilde T_{-L_1}^{\ell'}\leq T_{L_1}^{\ell'} \wedge \tilde T\right],\, \mathfrak T\right] 
\le\sum_{ m=0}^{ N+1}\mathbb E\left[\prod_{-N<i\leq m}\left(e^{3\tilde cL_0^\beta}\tilde q(i,\omega)\right)\right]. 
\end{gather}
Now we use the crucial observation that as $-N\le i\le N$,
 the random variables $\tilde q(i,\omega)$ are independent,
which finishes the proof of (\ref{productilqi2}).

\medskip
\noindent {\it Step 5: refined bound on the atypical exit probability
in the typical quenched exit event.} Here we will refine the
bound (\ref{productilqi2}), showing that there exists
a constant $c_3$, such that

\begin{gather}
\label{finestileftexit}
\mathbb E\left[P_{0,\omega}\left[\tilde T_{-L_1}^{\ell'}\leq \tilde T\wedge T_{L_1}^{\ell'}\right],\mathfrak T\right]\leq 
\sum_{m=0}^{ N+1}\left(c_3 e^{3\tilde cL_0^\beta}\tilde L_1^{d-1} \kappa^{-3c_1}\mathbb E\left[ q_{{\bf S}_0}\right]\right)^{N+m}
\end{gather}
 Observe that for each $i\in\mathbb Z$ and  $y\in \mathcal H'_i$, there exist a point
 $y'\in\{z:|z\cdot R(e_k)|<\tilde L_1\}\cap \mathbb Z^d $ such that 
$|y+3\ell'-y'|_1\le 1$ and a self-avoiding nearest neighbour path of length at most $3c_1$ connecting $y$ with $y'$.
Therefore,

\begin{gather*}
q_{{\bf S}_0}(\omega)\circ \theta_{y'}=P_{y',\omega}\left[X_{T_{B_{{\bf S}_0+y'}}}\notin \partial^+B_{{\bf S}_0+y'}\right] \\
\ge\kappa^{3c_1}P_{y,\omega}\left[X_{V_1}\in \mathcal H_{I(y)-1}, \,H_{\partial^+\mathcal H_i}=\infty\right]. 
\end{gather*}
Hence defining 
\begin{gather*}
\mathcal H_{i,3}:=\left\{z\in \mathbb Z^d:\, \exists y\in \mathbb Z^d
  \,|z-y|_1=1, (z-iL_0-3)(y-iL_0-3)\leq 0 \right\}\\
\cap\{z:|z\cdot R(e_k)|<\tilde L_1,\ {\rm for}\ {\rm all}\ 2\le k\le d\}
\end{gather*}
we see that
\begin{equation*}
\tilde q(i,\omega)\leq \kappa^{-3c_1}\sup_{\substack{z\in \mathcal H_{i,3}}}P_{z,\omega}\left[X_{T_{B_{{\bf S}_0+z}}}\notin \partial^+B_{{\bf S}_0+z}\right]=\kappa^{-3c_1}
\sup_{\substack{z\in \mathcal H_{i,3}}}q_{{\bf S}_0}(\omega)\circ \theta_{z}.
\end{equation*}
Using the bound $\sup_{\substack{z\in \mathcal H_{i,3}}}q_{{\bf S}_0}(\omega)\circ \theta_{z}\le \sum_{z\in \mathcal H_{i,3}}q_{{\bf S}_0}(\omega)\circ \theta_{z}$,
we finish the proof of inequality
(\ref{finestileftexit}).

\medskip
\noindent {\it Step 6: bound  on
the atypical exit event.}
 Here we will show that  there exists a constant $\mu>0$ such that
\begin{equation}
\label{atypannest}
\mathbb P\left[\mathfrak T^c\right]\leq c_4N\tilde L_1^{d-1}e^{-\mu L_0^{d(2\beta-1)}}
\end{equation}
holds, for some suitable constant $c_4$. Indeed,
by Theorem 4.4 of \cite{Sz04} (see also \cite{Sz01,Sz02}), we know that 
 there
exist constants $\mu>0$ (not depending on $\ell'\in O$) and $L'$ such that
for all $L\ge L'$ one has that

\begin{equation}
\label{aqe}
\mathbb P\left[P_{0,\omega}\left[X_{T_{U_{\beta, L}}}\cdot \ell' >0\right]\leq e^{-\tilde cL^\beta}\right]\leq e^{-\mu L^{d(2\beta-1)}}.
\end{equation}
Choosing $L_0\ge L'$, using the bound (\ref{aqe}) for all the points which are
in some $\mathcal H'_i$ or $\mathcal H'_{i,\beta}$ for
some $N\le i\le N+2$, and the fact that the cardinality
of these points is $c_4N\tilde L_1^{d-1}$ for some
constant $c_4$, we obtain (\ref{atypannest}).

\medskip

\noindent {\it Step 7: upper bound on the lateral exit probability.}
Using exactly the same argument as the one presented
in pages 524-526 of the proof of Proposition 2.1 of \cite{Sz02},
 we see that there is a constant $c_5$ such that

\begin{equation}
\label{finalboundlat}
P_0\left[\tilde T\leq \tilde T_{-L_1}^{\ell'}\wedge T_{L_1}^{\ell'}\right]\leq (2d-2)\left(c_5\tilde L_1^{(d-2)}\frac{L_1^3}{L_0^2}\tilde L_0 \mathbb 
E\left[q_{{\bf S}_0}(\omega)\right]\right)^{\frac{\tilde N}{12N}},
\end{equation}
where we used that  $\tilde N\geq 48 N$ and that $N\geq 3$.

\medskip

\noindent {\it Step 8: conclusion.} 
In view of (\ref{finestileftexit})-(\ref{atypannest}) and (\ref{finalboundlat}),we see that 
there exist positive constants $c_3, c_4, c_5$ and $c_6$ such that 
\begin{gather}
\nonumber 
\mathbb E\left[q_{{\bf S}_1}(\omega)\right]\leq P_0\left[
\tilde T_{-L_1}^{\ell'}\leq \tilde T\wedge T_{L_1}^{\ell'}\right]+
P_0\left[\tilde T\leq \tilde T_{-L_1}^{\ell'} \wedge T_{L_1}^{\ell'}\right]\\
\nonumber 
\leq \mathbb E\left[P_{0,\omega}\left[\tilde T_{-L_1}^{\ell'}\leq 
\tilde T\wedge T_{L_1}^{\ell'}\right],\, \mathfrak T\right]+ \mathbb P\left[ \mathfrak T^c\right]+P_0\left[\tilde T\leq \tilde T_{-L_1}^{\ell'} \wedge T_{L_1}^{\ell'}\right]\\
\nonumber 
\stackrel{(\ref{finestileftexit})-(\ref{atypannest})-(\ref{finalboundlat})}\leq \sum_{m=0}^{ N+1}\left(c_3\kappa^{-3c_1}\tilde L_1^{d-1}e^{3\tilde cL_0^\beta} \mathbb E\left[ q_{{\bf S}_0}(\omega)\right]\right)^{m+N}\\
\label{finalboundprop}
+c_4 N\tilde L_1^{d-1}e^{-\mu L_0^{d(2\beta-1)} }+ c_6\left(c_5\tilde L_1^{d-2)}\frac{L_1^3}{L_0^2}\tilde L_0 \mathbb E\left[q_{{\bf S}_0}(\omega)\right]\right)^{\frac{\tilde N}{12N}}. 
\end{gather}
\end{proof}

\subsection{Recursion}
\label{sectionrecursion}
Here we will use the result of Proposition \ref{prop1}, to inductively derive
a bound   for the exit probability
through atypical sides of boxes at an increasing sequence of scales.
Since we assume $(T')|\ell$, we know that
there is an open set $O\subset\mathbb S^{d-1}$ containing $\ell$,
such that for $\ell'\in O$, the atypical exit probabilities from slabs decay like
stretched exponentials.
Now, given  $\ell'\in O$,
we choose a rotation $R$, with $R(e_1)=\ell'$.
Let $\nu>0$.
 We next  consider  sequences of scales $\left(L_k\right)_{k\geq0}$ and $\left(\tilde L_k\right)_{k\geq0}$,
defining a sequence of triples $({\bf S}_k)_{k\ge 0}$
associated to the corresponding boxes
$$
 B_k:= B_{{\bf S}_k}:=R\left( (-L_k,L_k)\times (-\tilde L_k,\tilde L_k)^{d-1}
\right)\cap\mathbb Z^d 
$$
according to the notation (\ref{triple}),
that satisfy
\begin{gather}
\label{scalesk0}
L_0>3\sqrt d\\
\label{scalesk}
L_0^3>\tilde L_0>L_0,
\end{gather}
and

\begin{gather}
  \label{scalesk1}
  L_k=\nu L_{k-1},\hspace{0.5ex}\mbox{for }k\geq 1, \\
  \label{scalesk2}
  \tilde L_k=\nu^3 \tilde L_{k-1},\hspace{0.5ex}\mbox{for }k\geq 1.
  \end{gather}
Throughout we write $q_k$ and $B_k$ in place of $q_{{\bf S}_k}$ and
 $B_{{\bf S}_k}$.
 We will also need to use the fact that $(T')|\ell$ implies
 that for $\beta\in (3/4,1)$,
 there is a constant $c_7(d,\kappa)$ and  an $L''>0$
such that whenever we choose $L_0\ge L''$ one has that

\begin{equation}
\label{tgammaexp}
\mathbb E\left[q_0\right]\le e^{-c_7 L_0^{\frac{\beta+1}{2}}}.
\end{equation}
 The next lemma will be instrumental for the final proof. 

\medskip
\begin{lemma}
\label{lemmarecur}
Let $d\ge 2$. Consider a random walk in random environment
satisfying condition $(T')|l$. Then, there exists
$\nu=\nu(d,\kappa)>0$, $L_0=L_0(d,\kappa)>0$, $\tilde L_0=\tilde L_0(d,\kappa)>0$
satisfying (\ref{scalesk0}) and (\ref{scalesk}), and
a constant $c_{8}(d,\kappa)>0$  such that for all $\ell'\in O$ and    $k\ge 0$
we have that for $(L_k)_{k\ge 1}$ defined as in (\ref{scalesk1}) and
$(\tilde L_k)_{k\ge 1}$  as in (\ref{scalesk2}), 

\begin{equation}
\label{recurtionphik}
\mathbb E[q_k]\leq e^{- c_{8} L_k}.
\end{equation}
\end{lemma}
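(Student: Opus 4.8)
\textbf{Proof plan for Lemma \ref{lemmarecur}.}
The plan is to run an induction on $k$ using the seed estimate of Proposition \ref{prop1} as the inductive step, after first fixing the free parameters $\nu$, $L_0$, $\tilde L_0$ and $\beta$ so that each of the three terms on the right-hand side of (\ref{inepropo}) can be absorbed into the target bound $e^{-c_8 L_k}$. First I would commit to a value $\beta\in(3/4,1)$; with the scaling (\ref{scalesk1})--(\ref{scalesk2}) one has $N=\tilde N=\nu$ at every step when passing from ${\bf S}_{k-1}$ to ${\bf S}_k$ (so $\nu$ plays the role of both $N$ and $\tilde N$, and the constraint $\tilde N\ge 48N$ of Proposition \ref{prop1} forces a separate treatment — see below), and the initial estimate (\ref{tgammaexp}) gives the base case $\mathbb E[q_0]\le e^{-c_7 L_0^{(\beta+1)/2}}$, which is stronger than $e^{-c_8 L_0}$ provided $L_0^{(\beta+1)/2}\ge (c_8/c_7)L_0$, i.e.\ for $L_0$ large since $(\beta+1)/2<1$; this is where choosing $L_0$ large (depending on $\nu$, hence on $d,\kappa$) is used.

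The heart of the argument is the inductive step: assuming $\mathbb E[q_{k-1}]\le e^{-c_8 L_{k-1}}$, I would feed this into (\ref{inepropo}) with $L_0\leftarrow L_{k-1}$, $\tilde L_0\leftarrow \tilde L_{k-1}$, $L_1\leftarrow L_k=\nu L_{k-1}$, $\tilde L_1\leftarrow\tilde L_k=\nu^3\tilde L_{k-1}$, and $N=\tilde N/? $ chosen appropriately — here one must reconcile $N=\tilde N=\nu$ from the naive reading of (\ref{scalesk1})--(\ref{scalesk2}) with the hypothesis $\tilde N\ge 48N$ of Proposition \ref{prop1}; the clean way is to apply Proposition \ref{prop1} not in a single step but by regrouping, taking the ``inner'' box scaling factor in the longitudinal direction to be a fixed integer $N_0\ge 2$ and the transversal factor $\tilde N_0\ge 48 N_0$, and setting $\nu:=N_0$ while choosing the transversal growth exponent accordingly — in any case the polynomial prefactors $\tilde L_1^{d-1}$, $L_1^3/L_0^2$, etc., are all bounded by $C L_{k-1}^{A}$ for fixed $A=A(d)$ and $C=C(d,\kappa,\nu)$. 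The three terms then behave as follows. The first term is of the form $\bigl(C L_{k-1}^{A} e^{3c_1\log(1/\kappa)L_{k-1}^\beta}\bigr)^{2N+2}\bigl(e^{-c_8 L_{k-1}}\bigr)^N$; since $\beta<1$, for $L_{k-1}$ large the exponent is dominated by $-N c_8 L_{k-1}$, and because we gain a factor $N$ in the exponent while only losing constant and $L_{k-1}^\beta$ contributions, this is $\le \tfrac13 e^{-c_8 \nu L_{k-1}}=\tfrac13 e^{-c_8 L_k}$ once $c_8$ is fixed and $L_0$ is large enough that the induction never leaves the ``$L$ large'' regime. The second term $\bigl(C L_{k-1}^{A}\,e^{-c_8 L_{k-1}}\bigr)^{\tilde N/(12N)}$ is handled the same way: the stretched/polynomial corrections are beaten by $e^{-c_8 L_{k-1}}$, and the exponent $\tilde N/(12N)\ge 4$ gives room to get $\le\tfrac13 e^{-c_8 L_k}$. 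The third term $c_2 N\tilde L_1^{d-1}e^{-\mu L_{k-1}^{d(2\beta-1)}}$ is purely environmental; since $\beta>3/4\ge 1/2$ we have $d(2\beta-1)>0$, and in fact for $\beta$ close enough to $1$, $d(2\beta-1)>1$, so $e^{-\mu L_{k-1}^{d(2\beta-1)}}\le e^{-\mu L_{k-1}}\le e^{-c_8\nu L_{k-1}}$ for $c_8\le\mu/\nu$, again with polynomial corrections absorbed for $L_0$ large; thus this term is $\le\tfrac13 e^{-c_8 L_k}$ as well. Summing the three bounds closes the induction.

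Concretely I would choose the constants in the order: $\beta\in(3/4,1)$ with $d(2\beta-1)>1$ (possible since $d\ge2$), then $\nu=\nu(d,\kappa)$ a fixed integer $\ge\max\{3,\text{whatever }48\text{-type constraint is needed}\}$, then $c_8=c_8(d,\kappa)>0$ small enough that $c_8\le\mu/\nu$ and that the ``gain'' inequalities $N c_8 L - 3c_1\log(1/\kappa)(2N+2)L^\beta - A(2N+2)\log(C L)\ge c_8\nu L + \log 3$ hold for all $L\ge L_0$, and finally $L_0=L_0(d,\kappa)$ (and then $\tilde L_0$ with $L_0<\tilde L_0<L_0^3$) large enough to (i) satisfy the base-case inequality coming from (\ref{tgammaexp}), (ii) exceed the thresholds $L'$, $L''$ of (\ref{aqe}), (\ref{tgammaexp}), and (iii) make all the ``$L$ large'' estimates above valid — noting that since $L_k=\nu L_{k-1}$ is increasing, validity at $L_0$ propagates to all scales. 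The main obstacle I anticipate is purely bookkeeping rather than conceptual: making sure the \emph{same} constant $c_8$ works simultaneously in all three terms and at the base case, i.e.\ choosing the quantifier order so that $c_8$ is fixed before $L_0$ but after $\nu,\mu,\beta$, and checking that the first term's factor of $e^{3c_1\log(1/\kappa)(2N+2)L^\beta}$ — which grows in $N$ — is still dominated by the gain $N c_8 L$ uniformly; this is where one genuinely uses $\beta<1$ together with the fact that the $e^{-c_8 L}$ from the inductive hypothesis appears to the power $N$ while the bad factor appears to the power $2N+2$ but only with a $L^\beta$ in the exponent.
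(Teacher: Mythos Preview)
Your overall plan is correct in spirit, and the second and third terms of (\ref{inepropo}) do behave as you say, but there are two problems, one cosmetic and one that actually breaks the induction.

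The cosmetic one: from (\ref{scalesk2}) you have $\tilde L_k=\nu^3\tilde L_{k-1}$, so $\tilde N=\nu^3$, not $\nu$. The hypothesis $\tilde N\ge 48N$ of Proposition~\ref{prop1} is then simply $\nu^2\ge 48$, and no ``regrouping'' is needed.

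The real gap is in the first term. With $N=\nu$, the inductive hypothesis $\mathbb E[q_{k-1}]\le e^{-c_8 L_{k-1}}$ raised to the power $N$ gives \emph{exactly} $e^{-c_8\nu L_{k-1}}=e^{-c_8 L_k}$, which is already the target. There is zero slack left to absorb the prefactor $\bigl(C\,\tilde L_k^{\,d-1}e^{3c_1\log(1/\kappa)L_{k-1}^\beta}\bigr)^{2\nu+2}$, and that prefactor diverges as $k\to\infty$. Your own closing inequality
\[
Nc_8 L - 3c_1\log(1/\kappa)(2N+2)L^\beta - A(2N+2)\log(CL)\ \ge\ c_8\nu L+\log 3
\]
makes this transparent: since $N=\nu$, the terms $Nc_8L$ and $c_8\nu L$ cancel, and what remains is a negative quantity required to exceed $\log 3$, which fails for every $L$. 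So a single constant $c_8$ cannot be carried through the induction. (The same obstruction also spoils your proposed quantifier order at the base case: $c_7L_0^{(\beta+1)/2}\ge c_8L_0$ is an \emph{upper} bound on $L_0$ once $c_8$ is fixed, in tension with ``$L_0$ large''.)

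The paper resolves this by letting the exponential rate erode slightly at each step: one proves $\mathbb E[q_k]\le e^{-d_kL_k}$ for a decreasing sequence $(d_k)_{k\ge0}$ with $d_k-d_{k+1}$ of order $L_0^\beta\,\nu^{-(1-\beta)k}$, so that the total loss $\sum_{k\ge0}(d_k-d_{k+1})$ is a convergent geometric series (this is exactly where $\beta<1$ is used), and then sets $c_8:=\inf_k d_k$, which is positive for $\nu$ large. The mechanism is that the $L_{k-1}^\beta$-sized correction, once divided by $L_k$, is of order $\nu^{-(1-\beta)k}$, and it is the summability of these increments---not any direct comparison at a single scale---that closes the argument.
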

\begin{proof}
Note that by (\ref{tgammaexp}), we have that

\begin{equation}
  \label{induction0}
\mathbb E\left[q_0\right]\le e^{-d_{0} L_0},
\end{equation}
with

$$
d_{0}:=\frac{c_7}{L_0^{\frac{1-\beta}{2}}},
$$
for some fixed $\beta\in (3/4,1)$. 
Let us now define recursively for $k\ge 0$,

$$
d_{k+1}:=d_k-\left(\left(1+3c_1\log\frac{1}{\kappa}\right)L_0^\beta+3\right)\frac{1}{\nu^{(1-\beta)k}}.
  $$
  We will first prove by induction on $k\ge 0$, that

  \begin{equation}
    \label{inductionk}
  \mathbb E\left[q_k\right]\le e^{-d_{k} L_k}.
\end{equation}
Note that (\ref{inductionk}) is satisfied for $k=0$ (which is (\ref{induction0})).
 Let us now assume that (\ref{inductionk}) is satisfied for $k\ge 0$.
We will prove that it is then also satisfied by $k+1$.
Note that $L_k=\nu^k L_0$ while $\tilde L_k=\nu^{3k} \tilde L_0$.
Now, by  inequality (\ref{inepropo}) of Proposition \ref{prop1}
we have that

\begin{gather}
\nonumber 
\mathbb E[q_{k+1}]\leq 
(\nu+2) 
\left(c_2\kappa^{-3c_1}\tilde L_{k+1}^{d-1}e^{3c_1(\log(1/\kappa)) L_k^\beta}
\right)^{2\nu+2}\mathbb E\left[q_{k}\right]^\nu\\
\nonumber
+
\left(c_2\tilde L_{k+1}^{d-2}\frac{L_{k+1}^3}{L_k^2}\tilde L_k\mathbb E\left[q_{k}\right]\right)^{\frac{\nu^3}{12 \nu}}
+c_2\nu\tilde L_{k+1}^{d-1}e^{-\mu L_k^{d(2\beta-1)}}\\
\nonumber
=
(\nu+2)
\left(c_2\kappa^{-3c_1}\nu^{3(k+1)(d-1)} \tilde
  L_0^{d-1}e^{3c_1(\log(1/\kappa))\nu^{\beta k}L_0^\beta}
\right)^{2\nu+2}\mathbb E\left[q_k\right]^\nu\\
  \label{eqlemma1}
+
\left(c_2\nu^{3(d-2)(k+1)-1} \tilde L_0^{d-1} L_0\mathbb 
E\left[q_k\right]\right)^{\frac{\nu^2}{12}}
+c_2\nu^{3(d-1)(k+1)}\tilde L_0^{d-1}e^{-\mu
  \nu^{dk(2\beta-1)}L_0^{d(2\beta-1)}}.
\end{gather}
We will analyze each of the terms of (\ref{eqlemma1}) separately. Note
that the third term can be written as

$$
\exp\left\{-\mu\nu^{kd(2\beta-1)}L_0^{d(2\beta-1)}+\nu^kL_0
  d_0+3(d-1)(k+1)\nu+\log(c_2\tilde L_0^{d-1})\right\} e^{-d_{0}L_{k+1}}.
$$
Now, the exponent of the first exponential of the above expression is
bounded from above by

$$
-(\mu\nu^{d(2\beta-1)-1}L_0^{d(2\beta-1)}-L_0d_{0})\nu.
$$
Hence, using the fact that for $\beta\in (3/4,1)$, one has that
$d(2\beta-1)>1$, we can see that there is a $\nu_0=\nu_0(d,\kappa)$ (also depending on the choice
of $L_0$)
 such that for 
$\nu\ge\nu_0$, the above expression is bounded from above by 
$\log\frac{1}{3}$ and hence the third term of (\ref{eqlemma1}) is 
bounded from above by 

\begin{equation}
  \label{common}
\frac{1}{3}e^{-d_{0}L_{k+1}}. 
\end{equation}
A similar analysis lets us conclude that there is a
$\nu_1=\nu_1(d,\kappa)>\nu_0$ (also depending on the choice
of $L_0$) such that the second term of the
right-hand
side of (\ref{eqlemma1}) is bounded from above also by (\ref{common}).
Let us now write the first term of the right-hand side of (\ref{eqlemma1}) as 

\begin{gather}
 \nonumber 
\exp\left\{\left(\log\left((\nu+2)\nu^{3(k+1)(d-1)}\right)+\log\frac{c_2\tilde 
    L_0^{d-1}}{\kappa^{3c_1}}\right.\right.\\
\nonumber 
+\left.\left.3c_1\left(\log\frac{1}{\kappa}\right) L_0^\beta\nu^{\beta k}\right)(2\nu+2)-d_k\nu^{k+1}L_0\right\}. 
\end{gather}
Now, note that there is a $\nu_2=\nu_2(d,\kappa)\ge \nu_1$
(also depending on the choice
of $L_0$) such that for 
$\nu\ge \nu_2$, the above expression is bounded from above by 

$$
\left(\left(\left(1+3c_1\log\frac{1}{\kappa}\right) L_0^\beta+1\right)\frac{1}{\nu^{(1-\beta)k}}-d_k\right)L_{k+1}
$$
which proves that the first term of the right-hand side of
(\ref{eqlemma1}) is bounded from above by

$$
\frac{1}{3}e^{-d_{k+1}L_{k+1}}.
$$
Combining this estimate with (\ref{common}), we see that
(\ref{inductionk}) is satisfied for $k+1$. Finally,
note that

$$
d_k\ge d_0-\sum_{k=1}^\infty \left(\left(1+3c_1\log\frac{1}{\kappa}\right)L_0^\beta+3\right)\frac{1}{\nu^{(1-\beta)k}}=:c_{8}>0,
$$
where the last inequality is satisfied whenever $\nu\ge \nu_3$
for some $\nu_3(d,\kappa)$ (also depending on the choice
of $L_0$).
\end{proof}

\subsection{Final step in the  proof of Theorem \ref{theoremequivalence}}
\label{sectionfinal} 
The same argument as the one presented in the proof of Proposition 2.3
leading to (2.57) of \cite{Sz02}, shows that (\ref{recurtionphik}) of Lemma
\ref{lemmarecur}, implies  that there is an open set $O\subset\mathbb
S^{d-1}$ such that for all $\ell'\in O$ one has that

\begin{gather*}
\lim_{L\to\infty}\frac{1}{L}\log P_0\left[\tilde T_{- L}^{\ell'}<T_{L}^{\ell'}\right]<0.
\end{gather*}

\medskip

\end{document}